\documentclass[11pt, twoside, reqno]{amsart}
\usepackage[a4paper,left=35mm,right=35mm,top=30mm,bottom=30mm,marginpar=25mm]{geometry} 

\usepackage{xcolor}    
\usepackage{hyperref}
\definecolor{darkcyan}{cmyk}{1, 0, 0, 0.6}
\hypersetup{
    colorlinks=true,
    linkcolor=darkcyan,  
    urlcolor=darkcyan,    
    citecolor=darkcyan 
}
\makeatletter

\usepackage{subfig}
\usepackage{glossaries}
\usepackage{glossary-mcols}

\usepackage{aurical}
\usepackage{amsfonts,amsmath}	
\usepackage{amssymb}
\usepackage{verbatim}
\usepackage{amsopn}
\usepackage[english]{babel}
\usepackage{amsthm}
\usepackage{enumerate}
\usepackage{mathrsfs}	
\usepackage{enumitem}
\usepackage{mathtools}
\usepackage{esint}
\usepackage{bbm}
\usepackage{caption}
\usepackage{marginnote}
\usepackage{comment}

\theoremstyle{definition} \newtheorem{definition}{Definition}[section]
\theoremstyle{definition} \newtheorem{remark}[definition]{Remark}
\theoremstyle{plain} 
\theoremstyle{plain} \newtheorem{proposition}[definition]{Proposition}
\theoremstyle{plain} \newtheorem{theorem}[definition]{Theorem}
\theoremstyle{plain} \newtheorem{corollary}[definition]{Corollary}
\theoremstyle{definition} 
\theoremstyle{plain} 
\theoremstyle{definition}

\DeclareMathOperator{\dist}{dist}

\newcommand{\Id}{\mathrm{id}}

\renewcommand{\b}{\mathbf b}

\newcommand{\rest}{\llcorner}

\numberwithin{equation}{section} 

\theoremstyle{plain} \newtheorem*{theorem*}{Theorem}
\theoremstyle{plain} 
\theoremstyle{plain} \newtheorem*{mthm*}{Main Theorem}
\theoremstyle{plain} \newtheorem*{conjecture*}{Conjecture}
\theoremstyle{plain} \newtheorem{conjecture}[definition]{Conjecture}
\theoremstyle{plain} \newtheorem*{problem*}{Problem}
\usepackage{orcidlink}

\title{Measure preserving maps with bounded total variation}

\author{Stefano Bianchini}
\address{S. Bianchini: S.I.S.S.A., via Bonomea 265, 34136 Trieste, Italy}
\email{bianchin@sissa.it}

\author{Luca Talamini \orcidlink{0009-0008-5147-5184}}
\address{L. Talamini, S.I.S.S.A., via Bonomea 265, 34136 Trieste, Italy}
\email{ltalamin@sissa.it}

\begin{document}

\maketitle

\begin{abstract}
   Consider a piecewise affine Lipschitz map $\phi : \Omega \to \mathbb R$, where $\Omega \subset \mathbb R^d$ is an open set, and assume that $x \mapsto x + t \nabla \phi(x)$ is injective for almost every $t > 0$.  In (J.-G. Liu, R.~L. Pego, \emph{Rigidly breaking potential flows and a countable Alexandrov theorem for polytopes}, Pure Appl. Anal., \textbf{7}(4),  2025) the authors conjecture that every such $\phi$ must be locally convex. We prove the result assuming additionally $\nabla \phi \in BV_{loc}(\Omega)$, for a more general class of measure preserving maps. 
\end{abstract}

\section{Introduction}
In \cite{LP25} the authors consider the following problem: let $\Omega \subset \mathbb R^d$ be an open set, and consider masses $\{m_i\}_{i \in \mathbb N}$ and velocities $\{v_i\}_{i \in \mathbb N}$ such that
$$
\sum_i m_i = \mathscr L^d(\Omega)
$$
where $\mathscr L^d$ is the $d$-dimensional Lebesgue measure. It is a classical result in optimal transport \cite{Bre91} that there is a unique convex map $\nabla \phi$ such that 
$$
\mathscr L^d \Big(\big\{ x \in \Omega \; | \; \nabla \phi(x) = v_i \big\}\Big) = m_i.
$$
Since $\phi$ is convex, the map $x \mapsto x + t \nabla \phi(x)$ is injective for all $t > 0$ on the set $F \subset \Omega$, where
$$
F = \Big\{ x \in \Omega \; | \; \text{$\phi$ is differentiable at $x$} \Big\}.
$$
Clearly $F$ is of full measure in $\Omega$, i.e. $|\Omega \setminus F| = 0$.

\vspace{0.3cm}
Conversely, let a partition of $\Omega$ be given, up to a negligible set,
$$
\Omega = \bigcup_i A_i, \qquad \text{$A_i$ open sets}
$$
together with a Lipschitz map $\phi$, affine on each $A_i$, such that $x \mapsto x + t \nabla \phi(x)$ is injective on $F$ for a.e. $t > 0$. The following conjecture was put forward in \cite{LP25}:

\begin{conjecture}  
Every such  map $\phi$ must necessarily be locally convex in $\Omega$.
\end{conjecture}

\vspace{0.3cm}
Our aim here is to prove the result for maps $\phi$ with $\nabla \phi \in BV_{loc}$:
\begin{theorem}\label{thm:mainthm}
    In the above setting, if $\nabla \phi \in BV_{loc}(\Omega)$, $\phi$ must be locally convex.
\end{theorem}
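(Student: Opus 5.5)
Write $u = \nabla\phi$, so $u \in BV_{loc}(\Omega;\mathbb R^d) \cap L^\infty$ and $u$ is piecewise constant. Local convexity of $\phi$ is equivalent to the distributional Hessian $D^2\phi = Du$ being a nonnegative symmetric matrix-valued measure on $\Omega$. Since $u$ is piecewise constant, $Du$ has no absolutely continuous part. I expect it also has no Cantor part: on each open $A_i$ the function $u$ is constant, so $|Du|$ vanishes on $\bigcup_i A_i$. The complement of $\bigcup_i A_i$ is Lebesgue-null, but it need not be $\mathscr H^{d-1}$-$\sigma$-finite, so this does not immediately rule out a Cantor part. This point needs care; see the last paragraph.

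The core is the jump part. On the jump set $J_u$, with normal $\nu$ and one-sided values $u^\pm$, we have $Du\rest J_u = (u^+-u^-)\otimes\nu\,\mathscr H^{d-1}\rest J_u$. The tangential derivative of $\phi$ is continuous across $J_u$ because $\phi$ is Lipschitz, so $u^+-u^- = \lambda\nu$ for some scalar $\lambda$. I will show $\lambda \ge 0$, i.e. $(u^+-u^-)\cdot\nu \ge 0$, using injectivity. Blow up at an $\mathscr H^{d-1}$-a.e. point $x_0\in J_u$. At scale $r$, $u$ is close in $L^1$ to $u^+$ on $\{(x-x_0)\cdot\nu>0\}$ and to $u^-$ on the other side. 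Suppose $\lambda<0$ and fix $t>0$. Then the two half-balls under $T_t(x)=x+tu(x)$ are translated by $tu^\pm$, and these translations differ by $t\lambda\nu$, pointing towards each other. For $r \ll t|\lambda|$ the images of the two half-balls overlap in a set of measure comparable to $r^d$. Injectivity of $T_t$ on $F$ contradicts this, since injectivity together with the area formula gives $\mathscr L^d(T_t(E)) = \mathscr L^d(E)$ for each piece on which $u$ is constant. Concretely, I compare $\mathscr L^d(T_t(B_r(x_0)))$ with $\mathscr L^d(B_r(x_0))$. The $L^1$ error from the blow-up is $o(r^d)$, and the image of the small bad set changes measure by at most $o(r^d)$ because translations preserve measure. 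This requires injectivity at a single $t$; a.e. $t$ suffices since we may choose $t$ among them.

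One subtlety: the scale constraint $r \ll t|\lambda|$ runs the wrong way for a blow-up, which needs $r\to0$ with $t$ fixed. It is fine, however, because with $t$ fixed and $r\to 0$ the half-ball images are displaced by $t|\lambda|\gg r$ and become disjoint. So the correct regime is $t \sim r$, taking $t = r/|\lambda|$ chosen among admissible times. Rescaling $y=(x-x_0)/r$ then gives a limit map $y\mapsto y + s\,u_\infty(y)$ with $s=t/r$ fixed, where $u_\infty$ is the two-valued step function. For $\lambda<0$ this limit map overlaps a region of positive measure. Since $L^1$ convergence of $u_r\to u_\infty$ passes to measures of images, this contradicts measure preservation of the rescaled maps.

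The main obstacle is excluding a Cantor part, or more generally any part of $Du$ concentrated off $J_u$. I would handle it with the same blow-up idea at $|D^cu|$-a.e. points, using Alberti's rank-one theorem: $D^c u = \eta\otimes\xi\,|D^cu|$. Since $Du$ is symmetric, $\eta\parallel\xi$. At such points I would show the Radon–Nikodym density has nonnegative sign by a similar overlap argument: after blow-up, $u$ is asymptotically a monotone function of $x\cdot\xi$ in direction $\xi$, and a decreasing profile with $t$ comparable to the scale yields non-injectivity. Alternatively, I could use the piecewise-constant structure to show the Cantor part vanishes; every countably piecewise constant BV function has $D^cu=0$ when the pieces are sets of finite perimeter, by the structure of Caccioppoli partitions. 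The $A_i$ are open, and $u\in BV$ with countably many values makes them finite perimeter locally, so $Du$ is purely jump. With $D^2\phi\ge0$ established, $\phi$ is convex on every convex subset of $\Omega$, which is local convexity.
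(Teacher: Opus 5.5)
Your jump-part argument is sound, and it is a genuinely different route from the paper's. At points of $J_u$, the $L^1$ blow-up at admissible times $t=sr$ forces $(u^+-u^-)\cdot\nu\ge0$, because $T_t$ acts on $F\cap\bigcup_i A_i$ as an injective, hence measure-preserving, piecewise translation. The gap is the Cantor part, and it cannot be argued away. Your second option is false: a $BV$ function that is constant on each of countably many open sets covering $\Omega$ up to a null set may have $D^cu\neq0$. For $d=1$ take $\phi(x)=\int_0^x C(s)\,\mathrm ds$, where $C$ is the Cantor--Vitali function. Then $\phi$ is Lipschitz, affine on each interval complementary to the Cantor set, and convex, so $x+t\phi'(x)$ is injective for every $t>0$. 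Yet $D^2\phi=DC$ is purely Cantor, and $\phi(x_1)$ gives the same example in any dimension. Here every piece is an interval, so finite perimeter of the individual pieces does not help. The structure theorem for Caccioppoli partitions needs $\sum_i P(A_i)$ to be locally finite, which does not follow from $u\in BV_{loc}$. Cantor parts therefore really occur in this class, and you must prove their sign.

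Your first option does not do this as written. At $|D^cu|$-a.e.\ $x_0$ one has $a_r:=|Du|(B_r(x_0))/r^{d-1}\to0$. By Poincar\'e the mean oscillation of $u$ on $B_r(x_0)$ is then $O(a_r)=o(1)$, so with $t\sim r$ the rescaled maps $y\mapsto y+(t/r)\bigl(u(x_0+ry)-\bar u_r\bigr)$ tend to the identity and carry no information. The relevant scale is $t\sim r/a_r$. Along subsequences the limit maps are then $y\mapsto y+s\lambda\,\xi\,h(y\cdot\xi)$, with $\lambda<0$ and $h$ nondecreasing but possibly continuous (as for the Cantor function), so there are no two rigidly translated half-balls that overlap. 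Moreover, the limit is no longer a piecewise translation, so injectivity is not inherited; only the inequality $(S_\infty)_\sharp\mathscr L^d\rest B_1\le\mathscr L^d$ passes to the limit.

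The idea you are missing is the paper's: this expanding inequality by itself gives $\Delta\phi\ge0$. Differentiating $t\mapsto\int\psi\,\mathrm d\,(T_t)_\sharp\mathscr L^d\rest\Omega\le\int\psi\,\mathrm dx$ at $t=0^+$ yields $\int\nabla\phi\cdot\nabla\psi\le0$ for all $\psi\ge0$. Alberti's rank-one theorem and symmetry give $D^s\nabla\phi=\lambda\,\vec n\otimes\vec n\,|D^s\nabla\phi|$, and $\Delta\phi\ge0$ then forces $\lambda\ge0$ $|D^s\nabla\phi|$-a.e. This settles the jump and Cantor parts simultaneously, with no blow-up at all.
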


\vspace{0.3cm}
Proving the result for $\nabla \phi \notin BV$ is presently an almost completely open problem. The only non-$BV$ case is proved in \cite{LP25} assuming only continuity of the gradient, i.e. $\nabla \phi \in C(\Omega; \mathbb R^d)$.

\vspace{0.3cm}
The paper is structured as follows. In Section \ref{sec:sub} we show that every such $\phi$ must be subharmonic, this holds without the $BV$ assumption.  In Section \ref{sec:main} we conclude the proof by showing, using the theory of $BV$ functions, that $\phi$ must be locally convex. In Section \ref{sec:optproof} we give a different proof of the subharmonicity of $\phi$, based on optimal transport tools, which yields an additional property of $\phi$ in the case $\nabla \phi \notin BV$.

\section{Measure preserving maps and main result}\label{sec:main}
Our Theorem \ref{thm:mainthm} will be a consequence of a more general result, valid for a class of measure preserving maps.

In all of the following,  $\Omega \subset \mathbb R^d$ is an open set.

\begin{definition}
Let $T$ be a measurable map $T : \Omega \to \mathbb R^d$ and $F \subset \Omega$ measurable be given. We say that $T$ is \emph{measure preserving} on $F$ if for all $A\subset \Omega$ measurable, $T(F \cap A)$ is measurable and
\begin{equation}\label{eq:mp}
    T_\sharp \mathscr L^d \llcorner (F\cap A) = \mathscr L^d \llcorner T(F\cap A).
\end{equation}
\end{definition}

\vspace{0.3cm}
\begin{definition}\label{defi:mpc}
    Let $\phi : \Omega \to \mathbb R$ be a $\mathbf W^{1,1}(\Omega)$ function. We say that $\phi$ satisfies the \emph{measure preserving condition} (in short (\textbf{MPC})) if there is a full measure set  $F \subset \Omega$ for which
\begin{equation}\label{def:Ttmp}
    T_t : \Omega \to \mathbb R^d, \qquad T_t(x) := x + t \nabla \phi(x)
\end{equation}
is measure preserving on $F$ for a.e. $t \geq 0$.
\end{definition}

\vspace{0.3cm}
We prove the following theorem. 
\begin{theorem}\label{thm:main}
    Let $\phi \in \mathbf W^{1,1}(\Omega)$ be a map satisfying \emph{(\textbf{MPC})}, and assume that $\nabla \phi \in BV_{loc}(\Omega, \mathbb R^d)$. Then $\phi$ is locally convex in $\Omega$.
\end{theorem}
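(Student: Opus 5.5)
Let $u := \nabla\phi \in BV_{loc}(\Omega;\mathbb R^d)$, whose distributional derivative $Du = D^2\phi$ is a symmetric matrix-valued Radon measure. Local convexity of $\phi$ is equivalent to $D^2\phi \geq 0$ as a matrix measure. I will check this separately on the three parts of the decomposition $Du = \nabla u\,\mathscr L^d + D^j u + D^c u$.

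\textbf{Step 1 (subharmonicity).} I take as given the result announced for Section \ref{sec:sub}: (\textbf{MPC}) implies $\Delta\phi \geq 0$, i.e. $\operatorname{tr} Du \geq 0$ as a measure. The aim is to upgrade this trace condition to full positivity of $Du$. The mechanism will be a blow-up argument: at a $|Du|$-generic point, blow-ups of $\phi$ again satisfy (\textbf{MPC}), because the condition is invariant under translations, dilations $x \mapsto x_0 + rx$, and subtraction of affine functions. Step 1 then applies to these blow-ups as well.

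\textbf{Step 2 (absolutely continuous part).} At a point $x_0$ of approximate differentiability of $u$, the blow-ups of $\phi - \phi(x_0) - u(x_0)\cdot(x-x_0)$ converge to the quadratic form $\frac12\langle A y, y\rangle$, where $A = \nabla u(x_0)$ is symmetric. I must show $A \geq 0$. I would first check that the quadratic limit inherits (\textbf{MPC}), using stability of measure preservation under $L^1$ convergence of the maps $T_t$. The condition for the quadratic map is that $y \mapsto (I + tA)y$ is measure preserving, hence injective, for a.e. $t > 0$. If $A$ had an eigenvalue $\lambda < 0$, then at $t = -1/\lambda$ the map would collapse a direction. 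For $t$ slightly larger the determinant changes sign, and the map is still measure preserving, so it is still injective on a full measure set. A sign change alone is not a contradiction for a linear map, so I need a local argument instead. In the original (non-blown-up) setting, near $x_0$, for $t$ close to $-1/\lambda$, the Jacobian $\det(I + t\nabla u)$ is degenerate on a set of positive measure. Measure preservation forces $|\det(I + t\nabla u)| = 1$ wherever the map is injective and differentiable. Hence $\det(I + t\nabla u(x)) = \pm 1$ for a.e. $x$ and a.e. $t$. Since this is a polynomial identity in $t$, it forces $\nabla u(x)$ to be nilpotent, and since it is symmetric, $\nabla u = 0$ a.e. So the absolutely continuous part vanishes, which is consistent with the piecewise affine setting.

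\textbf{Step 3 (jump part).} At a $\mathcal H^{d-1}$-a.e. jump point, $u^+ - u^- = a \otimes \nu$. Symmetry of $Du$ gives $a \parallel \nu$, so $u^+ - u^- = s\nu$. Blowing up, $\phi$ becomes $\frac{s}{2}|(y\cdot\nu)^+|$ plus an affine function. If $s < 0$, then for every $t > 0$ the images of the two half-spaces under $T_t$ overlap in a slab of width $t|s|$. This contradicts injectivity on a positive-measure set, and the contradiction passes back to the original $\phi$ at small scales by the stability of measure preservation. Hence $s \geq 0$ and $D^j u \geq 0$.

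\textbf{Step 4 (Cantor part), the main obstacle.} By Alberti's rank one theorem, $D^c u = a \otimes \nu\,|D^c u|$, and symmetry again gives $D^c u = \sigma\, \nu\otimes\nu\,|D^c u|$ with $\sigma = \pm 1$. Subharmonicity from Step 1 gives $\operatorname{tr} D^c u = \sigma |D^c u| \geq 0$, so $\sigma = 1$. This is exactly where subharmonicity is essential: blow-ups at Cantor points are not clean, and the trace condition settles the sign without a geometric argument.

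Combining Steps 2–4 gives $D^2\phi \geq 0$, so $\phi$ is locally convex. The jump part could also be handled via the trace, since $\operatorname{tr}(s\,\nu\otimes\nu) = s$, giving a fallback if Step 3's overlap argument is delicate. The genuinely delicate points are Step 1 itself and the rank-one structure in Step 4.
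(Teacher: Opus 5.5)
Your argument is correct and, once the blow-up scaffolding is set aside, it is the paper's proof. The paper treats the whole singular part $D^s(\nabla\phi)$ exactly as in your Step 4 (Alberti's rank-one theorem, symmetry giving $\lambda\,\vec n\otimes\vec n$, and $\lambda\ge 0$ from Proposition \ref{prop:sub}), so your trace fallback for the jump part is what it actually uses and the Step 3 overlap argument is not needed, and it removes the absolutely continuous part, as in your Step 2, via $|\det(\mathrm{Id}+tD^a\nabla\phi)|=1$ a.e.\ for a.e.\ $t$, the polynomial identity in $t$, and symmetry.

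The one step you only assert is that Jacobian identity for a map $T_t$ that is merely $BV$; the paper proves it by Lipschitz approximation of $\nabla\phi$ on sets $G^\Lambda$ exhausting $\Omega$ (Theorem 5.34 in Ambrosio--Fusco--Pallara), the area formula for $\mathrm{Id}+tf^\Lambda$ on $G^\Lambda$, and the fact that measure preservation makes images of disjoint sets essentially disjoint, so the multiplicity is one a.e.
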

Clearly, Theorem \ref{thm:mainthm} follows from \eqref{thm:main}. We would expect every function $\phi \in \mathbf W^{1,1}(\Omega)$ belonging to the more general class of maps satisfying the measure preserving condition of Definition \ref{defi:mpc} to be locally convex, but we have been unable to prove it.

\vspace{0.3cm}
The proof of Theorem \ref{thm:main} consists of two steps. As a first point, we prove the following proposition, which is valid without the $BV$ assumption.

\begin{proposition}\label{prop:sub}
Let $\phi \in \mathbf W^{1,1}(\Omega)$ be a map satisfying \emph{(\textbf{MPC})}. Then $\phi$ is subharmonic in $\Omega$, that is,
\begin{equation}\label{eq:sub}
    - \int_{\Omega} \nabla \phi(x) \cdot \nabla \psi(x) \, \mathrm{d} x \geq 0 \qquad \text{for all $\psi \in C^1_c(\Omega)$}, \qquad \psi \geq 0.
\end{equation}
\end{proposition}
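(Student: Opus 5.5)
The plan is a first-order expansion in $t$ of the measure preserving identity, tested against $\psi$ itself; the only inequality needed is that the image $T_t(F)$ cannot carry more $\psi$-mass than all of $\mathbb R^d$.

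\textbf{Step 1 (the inequality for fixed $t$).} Fix $\psi\in C^1_c(\Omega)$ with $\psi\ge 0$, and extend it by zero to a $C^1_c(\mathbb R^d)$ function, still called $\psi$. Let $F$ be the full measure set given by (\textbf{MPC}). Fix a time $t>0$ for which $T_t$ is measure preserving on $F$, and apply \eqref{eq:mp} with $A=\Omega$. Integrating $\psi$ against both sides of
$$
(T_t)_\sharp \mathscr L^d\llcorner F=\mathscr L^d\llcorner T_t(F)
$$
gives
$$
\int_{F}\psi\big(x+t\nabla\phi(x)\big)\,\mathrm d x=\int_{T_t(F)}\psi(y)\,\mathrm d y\le\int_{\mathbb R^d}\psi(y)\,\mathrm d y=\int_\Omega\psi(x)\,\mathrm d x .
$$
The inequality uses only $\psi\ge 0$. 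Since $|\Omega\setminus F|=0$, the left-hand side equals the integral over $\Omega$. Subtracting and dividing by $t$ yields
$$
\int_\Omega \frac{\psi\big(x+t\nabla\phi(x)\big)-\psi(x)}{t}\,\mathrm d x\le 0
$$
for a.e. $t>0$.

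\textbf{Step 2 (limit $t\to0$).} Choose a sequence $t_n\downarrow 0$ of admissible times, which exists because (\textbf{MPC}) holds for a.e. $t$.
\begin{itemize}
\item At every $x$ where $\nabla\phi(x)$ is defined, the integrand converges to $\nabla\psi(x)\cdot\nabla\phi(x)$, since $\psi\in C^1$.
\item By the mean value theorem the integrand is bounded by $\|\nabla\psi\|_\infty|\nabla\phi(x)|$, which lies in $L^1(\Omega)$ because $\phi\in\mathbf W^{1,1}(\Omega)$.
\end{itemize}
Dominated convergence therefore gives $\int_\Omega\nabla\phi\cdot\nabla\psi\,\mathrm d x\le 0$, which is exactly \eqref{eq:sub}.

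\textbf{Expected obstacle.} There is no serious obstacle, only bookkeeping. The displacement $t\nabla\phi$ is unbounded and may push points of $\Omega$ outside $\Omega$. This is harmless: $\psi$ is extended by zero, so such points contribute $0$ on the left. Likewise, points of $\operatorname{supp}\psi$ not in $T_t(F)$ only strengthen the inequality. It is essential to use the global choice $A=\Omega$, rather than a localized $A$, so that the comparison $\int_{T_t(F)}\psi\le\int\psi$ requires no control on where mass goes. Measurability of $T_t(F)$ is part of the definition. Injectivity is never used explicitly: it is encoded in the fact that the pushforward has density at most one.
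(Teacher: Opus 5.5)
Your proof is correct and follows the same idea as the paper. Because measure preserving maps are expanding, $g^\psi(t)=\int\psi\,\mathrm d\rho_t\le\int\psi\,\mathrm d x=g^\psi(0)$ for $\psi\ge0$ supported in $\Omega$, and one passes to the limit $t\to0^+$ by dominated convergence with the bound $\|\nabla\psi\|_\infty|\nabla\phi|\in\mathbf L^1(\Omega)$. Your version is more direct: you take the difference quotient of $g^\psi$ at $t=0$, whereas the paper computes $(g^\psi)'(s)$ for every $s$, integrates over $[0,t]$ and then averages, and that detour's final estimate as written tacitly uses a Lipschitz bound on $\nabla\psi$, which a general $\psi\in C^1_c(\Omega)$ does not have.
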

Unfortunately, it is well known that the gradient of subharmonic functions does not need to lie in $BV_{loc}$, as it can be readily seen by looking at the fundamental solution of the Poisson equation
$$
\Delta \Phi = \delta_0, \qquad \Phi = \frac{1}{(d-2) \omega_d}\frac{1}{|x|^{d-2}}.
$$
Therefore presently the $BV_{loc}$ assumption in Theorem \ref{thm:main} cannot be removed.

\vspace{0.3cm}
We will prove Proposition \ref{prop:sub} in Section \ref{sec:sub}, for a more general class of maps which we call \emph{expanding}.

\vspace{0.3cm}
Using Proposition \ref{prop:sub} we can prove Theorem \ref{thm:main}.
\begin{proof}[Proof of Theorem \ref{thm:main}]
By the standard theory of $BV$ functions, see \cite{AFP_book}, since by assumption $\nabla \phi \in BV_{loc}(\Omega; \mathbb R^d)$, we can decompose the derivative $D (\nabla \phi)$ as
$$
D (\nabla \phi) = D^{a} (\nabla \phi) + D^s (\nabla \phi) \in \mathscr M(\Omega, \mathbb R^{d\times d})
$$
where $D^a(\nabla \phi)$ is the absolutely continuous part of $D(\nabla \phi)$ with respect to the Lebesgue measure $\mathscr L^d$, and $D^s(\nabla \phi)$ is the singular part. Here $\mathscr M(\Omega, \mathbb R^{d\times d})$ is the space of locally finite matrix valued measures. In the following we analyze them separately.

\vspace{0.5cm}
\textbf{1.} \emph{(Singular part).} We have that
$$
D^s(\nabla \phi) =  \frac{\mathrm{d} D^s(\nabla \phi) }{\mathrm{d} |D^s(\nabla \phi)|} |D^s(\nabla \phi)|
$$
where $ \frac{\mathrm{d} D^s(\nabla \phi) }{\mathrm{d} |D^s(\nabla \phi)|}$ is the polar matrix, defined for $|D^s(\nabla \phi)|$-a.e. $x$. By the Alberti's rank 1 theorem \cite[Theorem 3.94]{AFP_book} for $|D^s(\nabla \phi)|$ a.e. $x \in \Omega$ there exist $a(x), b(x) \in \mathbb R^n \setminus \{0\}$ such that
$$
 \frac{\mathrm{d} D^s(\nabla \phi) }{\mathrm{d} |D^s(\nabla \phi)|} = a(x) \otimes b(x)
$$
Since the polar matrix $ \frac{\mathrm{d} D^s(\nabla \phi) }{\mathrm{d} |D^s(\nabla \phi)|}$ must be symmetric, we deduce the existence of $\lambda(x) \in \mathbb R \setminus \{0\}$ and $\vec n(x) \in \mathbb R^d \setminus\{0\}$ such that $\lambda(x)\vec n(x)\otimes \vec n(x) = a(x) \otimes b(x)$ for $|D^s(\nabla \phi)|$ a.e. $x \in \Omega$, therefore
\begin{equation}\label{eq:D^s}
D^s(\nabla \phi) =  \lambda \vec n \otimes \vec n \;|D^s(\nabla \phi)|
\end{equation}
and the Laplacian of $\phi$ is 
$$
\Delta \phi = \mathrm{Tr}(D^a(\nabla \phi)) + \lambda |D^s(\nabla \phi)|.
$$
By Proposition \ref{prop:sub} we deduce in particular that 
$$
\lambda(x) \geq 0 \qquad \text{for $|D^s(\nabla \phi)|$-almost every $x \in \Omega$}
$$
therefore the polar $\frac{\mathrm{d} D^s(\nabla \phi) }{\mathrm{d} |D^s(\nabla \phi)|}$ is a positive definite matrix and by \eqref{eq:D^s} $D^s(\nabla \phi)$ is a positive definite matrix valued measure, since for every continuous vector field $\xi \in C_c(\Omega; \mathbb R^d)$, there holds
$$
\int_{\Omega} \xi^\perp(x) \cdot \frac{\mathrm{d} D^s(\nabla \phi) }{\mathrm{d} |D^s(\nabla \phi)|}(x) \cdot \xi(x) \; \mathrm{d}\, |D^s(\nabla \phi)|(x) \geq 0.
$$

\vspace{0.5cm}
\textbf{2.} \emph{(Absolutely continuous part).} We will show that in fact the absolutely continuous part is zero. We will use the following well-known Lusin-Lipschitz properties for $BV$ functions (see \cite[Theorem 5.34]{AFP_book}). In our setting, since $\nabla \phi \in BV(\Omega, \mathbb R^n)$, there exists a constant $\kappa$ (depending only on the dimension $d$) such that for every $\Lambda > 0$ there exists a Lipschitz function $f^\Lambda$ with Lipschitz constant less then $\kappa \Lambda$, such that 
$$
\mathscr L^d \Big( \big\{ x \in \Omega \; | \; \nabla \phi(x) \neq f^\Lambda(x)  \big\} \Big) \leq \frac{\kappa}{\lambda}|D \nabla \phi|(\Omega).
$$
Letting $G^\Lambda := \{ x \in \Omega \; | \; f(x) = \nabla \phi(x)\}$, we also have the property
$$
G^\Lambda_1 \subset G^\Lambda_2 \qquad \text{if $\Lambda_1 \leq \Lambda_2$}.
$$
 We have that $f^\Lambda(x)-\nabla \phi(x) = 0$ for a.e.  $ x \in G^\Lambda$, therefore by the locality of the gradient of $BV$ functions we obtain $$Df^\Lambda \llcorner G^\Lambda = D (\nabla \phi) \llcorner G^\Lambda$$
so that in particular $D^s (\nabla \phi) \llcorner G^\Lambda = 0$, and moreover for the absolutely continuous part there holds
\begin{equation}\label{eq:locgr}
D f^\Lambda(x) = D^a (\nabla \phi)(x) \qquad \text{for $\mathscr L^d$ almost every $x \in G^\Lambda$}.
\end{equation}

Define now the Lipschitz function 
$$
T_t^\Lambda(x) := x + t f^\Lambda(x).
$$
We have now that, using the standard area formula for the Lipschitz function $f^\Lambda$, for every measurable $A \subset \Omega$ there holds
\begin{equation}\label{eq:multeplicity}
\begin{aligned}
    \int_{A \cap G^\Lambda}|\mathrm{det} \, D^a T_t(x)| \mathrm{d} x & =  \int_{A \cap G^\Lambda} |\mathrm{det} \, DT^\Lambda_t(x)| \mathrm{d} x \\
    & = \int_{\mathbb R^d}  \sharp \{ x \in G^\Lambda\cap A \; | \; T_t^\Lambda(x) = y \} \mathrm{d} y\\
    & = \int_{\mathbb R^d}  \sharp \{ x \in G^\Lambda\cap A \; | \; T_t(x) = y \} \mathrm{d} y.
    \end{aligned}
\end{equation}
Notice that for every disjoint measurable $A, B \subset F$, the measure preserving condition \eqref{def:Ttmp} implies that $|T_t(A) \cup T_t(B) | = |T_t(A)| + |T_t(B)|$ and therefore $T_t(A)$ and $T_t(B)$ are essentially disjoint; in particular this yields 
$$
\sharp \{ x \in G^\Lambda\cap A \; | \; T_t(x) = y \} = 1 \qquad \text{for a.e. $y \in \mathbb R^d$}.
$$
In fact, assume by contradiction that for a positive measure set $B \subset \mathbb R^d$ there are at least two elements $x_1(y) \neq x_2(y)$ such that $T_t(x_1(y)) = T_t(x_2(y)) = y$. Then, by the measurable selection theorem, there are two measurable sets $A_1, A_2 \subset \Omega$ such that $T_t(A_1) = B = T_t(A_2)$ up to negligible sets, which is a contradiction.
Therefore we conclude, by letting $\Lambda \to +\infty$, that 
$$
\int_{A}|\mathrm{det} \, D^a T_t(x)| \mathrm{d} x  = \int_{T_t(A)}  1 \mathrm{d} y = |T_t(A)| = |A|
$$
which implies that 
$$
|\mathrm{det} \, D^a T_t(x)| = 1 \qquad \text{for a.e. $x \in \Omega$}.
$$
Therefore we deduce that for almost every $x \in \Omega$
\begin{equation}\label{eq:singlet}
   1 =  | \det (\mathrm{Id}_{\mathbb R^d} + t D^a(\nabla \phi)(x))| = \prod_{i} |(1+ t \lambda_i(x))|
\end{equation}
where $\lambda_i(x)$, $i = 1,\ldots, d$ are the eigenvalues of $D^a(\nabla \phi)(x)$. Repeating the argument for a dense and countable set of times $t_j \geq 0$, and using the continuity in $t$ of the expressions in \eqref{eq:singlet}, we deduce that for $\mathscr L^d$ almost every $x \in \Omega$
\begin{equation}
   1  = \prod_{i} |(1+ t \lambda_i(x))| \qquad \text{for all $t \geq 0$}.
\end{equation}
This clearly implies $\lambda_i(x) = 0$ for all $i = 1, \ldots, d$. Since this holds for $\mathscr L^d$-almost every $x \in \Omega$, we deduce that $D^a(\nabla \phi) = 0$.

\vspace{0.3cm}
Finally, using the first step, we conclude 
$$
D(\nabla  \phi) = D^s(\nabla \phi) \geq 0.
$$
is a positive definite matrix valued measure. It is well known that positive definite distributional Hessian of $\phi$ is equivalent to $\phi$ being locally convex, therefore this concludes the proof.
\end{proof}

\section{Subharmonicity and expanding maps}\label{sec:sub}

The aim of this Section is to provide a proof of Proposition \ref{prop:sub}. We will prove it in the more general setting of \emph{expanding maps}, that we now introduce.

\begin{definition}\label{def:Texp}
We say that a measurable map $T : \Omega \to \mathbb R^d$ is \emph{expanding} if  
\begin{equation}\label{eq:expanding}
   \rho := T_\sharp \mathscr L^d \llcorner \Omega \leq \mathscr L^d
\end{equation}
\end{definition}

\begin{remark}\label{rem:mpexp}
Clearly, if $T$ is measure preserving on $F \subset \Omega$ (Definition \ref{def:Ttmp}) and $F$ is of full measure in $\Omega$, there holds $$T_\sharp \mathscr L^d \llcorner F =  T_\sharp \mathscr L^d \llcorner \Omega = \mathscr L^d \llcorner T(F) \leq \mathscr L^d$$ therefore measure preserving maps are also expanding according to Definition \ref{def:Texp}.
\end{remark}

\vspace{0.3cm}
In the next proposition, we show that if a gradient $\nabla \phi$ is such that $x+ t \nabla \phi(x)$  is expanding for a.e. $t \geq 0$, then $\phi$ is subharmonic:
\begin{proposition}
    Let $\phi: \Omega \to \mathbb R$ be in $\mathbf W^{1,1}(\Omega)$ and assume that $T_t(x) = x + t \nabla \phi(x)$ is expanding for a.e. $t \geq 0$. Then it holds
    \begin{equation}
        \Delta \phi \geq 0 \qquad \text{in $\mathscr D^\prime(\Omega)$}.
    \end{equation}
\end{proposition}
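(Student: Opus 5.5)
We prove the claim by testing the expanding condition against a nonnegative test function and expanding in $t$ as $t \to 0^+$. Fix $\psi \in C^1_c(\Omega)$ with $\psi \geq 0$. Since $\rho_t := (T_t)_\sharp \mathscr L^d \llcorner \Omega \leq \mathscr L^d$, the function $\psi$ does not extend naturally outside $\Omega$. We therefore extend it by zero to a $C^1_c(\mathbb R^d)$ function, still denoted $\psi$. We then obtain, for a.e. $t \geq 0$,
$$
\int_\Omega \psi(x + t\nabla\phi(x))\,\mathrm d x = \int_{\mathbb R^d} \psi \,\mathrm d\rho_t \leq \int_{\mathbb R^d} \psi(y)\,\mathrm d y = \int_\Omega \psi(x)\,\mathrm d x .
$$
Hence the quantity
$$
\frac{1}{t}\int_\Omega \big[\psi(x+t\nabla\phi(x)) - \psi(x)\big]\,\mathrm d x \leq 0
$$
for a.e. $t>0$, in particular along a sequence $t_j \downarrow 0$.

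The next step is to pass to the limit $t_j \to 0$ in the difference quotient. Pointwise, the integrand converges to $\nabla\psi(x)\cdot\nabla\phi(x)$ for every $x$, since $\psi \in C^1$. The difference quotient is bounded by $\|\nabla\psi\|_\infty |\nabla\phi(x)|$ by the mean value theorem, and this bound lies in $L^1(\Omega)$ since $\phi \in \mathbf W^{1,1}(\Omega)$. Dominated convergence then gives
$$
\int_\Omega \nabla\phi\cdot\nabla\psi\,\mathrm d x \leq 0,
$$
which is exactly \eqref{eq:sub}, i.e. $\Delta\phi \geq 0$ in $\mathscr D'(\Omega)$.

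The main subtlety is the support issue. The integrand $\psi(x + t\nabla\phi(x))$ can be nonzero at points $x \in \Omega$ far from $\operatorname{supp}\psi$ when $\nabla\phi$ is large. This is harmless on both sides. On the left, the integral is over $\Omega$ with respect to $\mathscr L^d$, exactly as in the definition of push-forward. On the right, the zero extension of $\psi$ to $\mathbb R^d$ is used, so we never need $T_t(\Omega) \subset \Omega$. Note also that the pointwise bound $|\psi(x+t\nabla\phi) - \psi(x)| \leq t\|\nabla\psi\|_\infty|\nabla\phi(x)|$ holds globally, with no restriction to a neighbourhood of $\operatorname{supp}\psi$, because $\psi$ is $C^1_c(\mathbb R^d)$ and hence globally Lipschitz. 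So no localization argument is needed, and the proof is just this first-order expansion together with dominated convergence.
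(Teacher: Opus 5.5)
Your proof is correct and uses the same idea as the paper: test $\rho_t\le\mathscr L^d$ against $\psi\ge 0$ extended by zero, expand to first order in $t$, and pass to the limit by dominated convergence with the bound $\|\nabla\psi\|_\infty|\nabla\phi|\in \mathbf L^1$. The paper takes a longer route, showing $g^\psi(t)=\int\psi\,\mathrm d\rho_t$ is Lipschitz, computing $(g^\psi)'(s)$, integrating over $[0,t]$ and averaging; you take the difference quotient at $t=0$ directly, which skips that detour and needs only $\psi\in C^1_c$.
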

By Remark \ref{rem:mpexp}, this also proves Proposition \ref{prop:sub}.
\begin{proof}
We need to show that for every $0 \leq \psi \in C^1_c(\Omega)$, there holds
\begin{equation}
    \int_\Omega \nabla \phi(x) \nabla \psi(x) d x \leq 0.
\end{equation}

Let $\rho_t = (T_t)_\sharp \mathscr L^d \llcorner \Omega$.
Since $T_t$ is expanding, we have that for all $\psi : \mathbb R^d \to \mathbb R^+$ such that $\psi = 0 $ in $\overline{\Omega^c}$, there holds
\begin{equation}\label{eq:expanding1}
    \int_{\mathbb R^d} \psi(x) d \rho_t \leq  \int_{\mathbb R^d} \psi(x) d \mathscr L^d = \int_{\mathbb R^d} \psi(x) d \rho_0.
\end{equation}
We show that for every $\psi \in C^1_c(\Omega)$ compactly supported in $\Omega$, the function $g^\psi:\mathbb R^+ \to \mathbb R^+$ defined by
$$
g^\psi(t) := \int_{\mathbb R^d} \psi(x) d \rho_t
$$
is Lipschitz and we compute its derivative. We have for all $\delta > 0$
\begin{equation}\label{eq:dct}
\begin{aligned}
    \frac{1}{\delta} |\psi(x+(t+\delta)\nabla \phi(x)) -\psi(x + t\nabla \phi(x)) | \leq L  |\nabla \phi(x)| 
\end{aligned}
\end{equation}
where $L$ is the Lipschitz constant of $\psi$. Then we have, using the definition of $\rho_t$,
$$
\begin{aligned}
    \lim_{\delta \to 0^+}\frac{g^\psi(t+\delta)-g^\psi(t)}{\delta} & = 
    \lim_{\delta \to 0^+} \int_{\mathbb R^d} \psi(x) d \rho_{t+\delta}-\int_{\mathbb R^d} \psi(x) d \rho_t \\
    & = \lim_{\delta \to 0^+} \frac{1}{\delta} \int_\Omega  \psi(x+(t+\delta)\nabla \phi(x)) -\psi(x + t\nabla \phi(x)) d x \\
    & = \int_\Omega  \lim_{\delta \to 0^+}\frac{1}{\delta} \psi(x+(t+\delta)\nabla \phi(x)) -\psi(x + t\nabla \phi(x)) d x\\
    & = \int_\Omega \nabla \psi(x+ t\nabla \phi(x)) \cdot \nabla \phi(x) d x
    \end{aligned}
$$
where the penultimate equality follows from the dominated convergence theorem, which we could apply because of \eqref{eq:dct} since $|\nabla \phi| \in \mathbf L^1(\Omega)$.

Now using \eqref{eq:expanding1} we compute
$$
\begin{aligned}
    0 & \geq g^\psi(t)-g^\psi(0) \\
    & = \int_0^t(g^\psi)^\prime(s) d s \\
    & = \int_0^t \int_\Omega \nabla \psi(x+ s\nabla \phi(x)) \cdot \nabla \phi(x) d x d s.
\end{aligned}
$$
In particular we have
$$
\begin{aligned}
0 & \geq \lim_{t \to 0^+} \frac{1}{t} \int_0^t  \int_\Omega \nabla \psi(x+ s\nabla \phi(x)) \cdot \nabla \phi(x) d x d s \\
& =\lim_{t \to 0^+} \int_\Omega \nabla \phi(x) \cdot \frac{1}{t} \int_0^t  \nabla \psi(x+ s\nabla \phi(x))  d s d x \\
& = \int_\Omega \nabla \phi(x) \nabla \psi(x) d x
\end{aligned}
$$
where the last equality follows from the fact that 
$$
\frac{1}{t}\int_0^t  \nabla \psi(x+ s\nabla \phi(x))  d s \longrightarrow \nabla \psi(x) \qquad \text{strongly in $\mathbf L^1(\Omega)$ as $t \to 0^+$}
$$
because again by \eqref{eq:dct} we have
$$
\begin{aligned}
     \int_\Omega \left| \frac{1}{t} \int_0^t   \nabla \psi(x+ s\nabla \phi(x))- \nabla \psi(x)  d s\right| d x  & \leq  \int_\Omega \frac{1}{t}\int_0^t L s |\nabla \phi(x)| d s d x\\
     & \leq L t \int_\Omega |\nabla \phi(x)| d x.
\end{aligned}
$$
This concludes the proof.

\end{proof}

\section{Optimal transport approach}\label{sec:optproof}
Here we provide a different proof of Proposition \ref{prop:sub} based on optimal transport techniques.

\vspace{0.3cm}
Let us define the measures, for all $t > 0$,
$$
\mu := \mathscr L^d \llcorner \Omega, \qquad \nu_t := (T_t)_\sharp \mathscr L^d \llcorner \Omega.
$$
In this section we need to assume that the first moment is finite, that is
$$
\int |x|^2 \mathrm{d} \mu, \qquad \int |y|^2 \mathrm{d} \nu_t < \infty
$$
which amounts to assume that $T_t \in \mathbf L^2(\Omega, \mathbb R^d)$, or equivalently $\nabla \phi \in \mathbf L^2(\Omega, \mathbb R^d)$, and that 
$$
\int_{\Omega} |x|^2 \mathrm{d} x < \infty.
$$

From \cite{Bre91} we know that there exists a unique convex function $f_t$ such that $(\nabla f_t)_\sharp \mu = \nu_t$.
As above, this means that there exists a set of full measure (which we can take to be equal to $F$, recall Definition \ref{defi:mpc}) such that $\nabla {f_t}$ exists in $F$ and
\begin{equation*}
(\nabla {f_t})_\sharp \mathscr L^d = \mathscr L^d \rest_{F_t}
\end{equation*}
where $F_t = \nabla f_t(F)$.

\vspace{0.3cm}
We recall the Polar factorization theorem by Brenier \cite{Bre91}. 
\begin{theorem}\label{thm:pd}
    Let $\xi : \Omega \to \mathbb R^d$ be a measurable map in $\mathbf L^2(\Omega; \mathbb R^d)$, such that $\xi_\sharp \mathscr L^d \llcorner \Omega$ is absolutely continuous with respect to the Lebesgue measure. There exists a unique decomposition 
    $$
    \xi = \nabla u \circ \Phi
    $$
    were $u: \Omega \to \mathbb R$ is a convex function and $\Phi : \Omega \to \Omega$ is measure preserving (i.e. $\Phi_\sharp \mathscr L^d \llcorner \Omega = \mathscr L^d \llcorner \Omega$). Moreover, $\nabla u$ is the optimal map for the quadratic transport cost  pushing forward $\mathscr L^d \llcorner \Omega$ to $\xi_\sharp \mathscr L^d \Omega$.
\end{theorem}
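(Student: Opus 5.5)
We prove Brenier's polar factorization (Theorem \ref{thm:pd}) by combining Brenier's existence theorem with a uniqueness argument for optimal maps.

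\textbf{Step 1: the convex factor.} Set $\nu := \xi_\sharp \mathscr L^d \llcorner \Omega$. Since $\xi \in \mathbf L^2$, $\nu$ has finite second moment, and we assume $\int_\Omega |x|^2\,\mathrm{d}x<\infty$. Brenier's theorem \cite{Bre91} gives a convex $u$ with $(\nabla u)_\sharp \mathscr L^d\llcorner\Omega = \nu$, and $\nabla u$ is the unique optimal map for the quadratic cost. This proves the last sentence of the statement.

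\textbf{Step 2: the measure preserving factor.} By hypothesis $\nu \ll \mathscr L^d$. Apply Brenier's theorem in the reverse direction to transport $\nu$ to $\mathscr L^d\llcorner\Omega$. The optimal map is $\nabla u^*$, where $u^*$ is the Legendre transform, and one has $\nabla u^*\circ\nabla u = \mathrm{Id}$ $\mathscr L^d$-a.e. on $\Omega$ and $\nabla u\circ \nabla u^* = \mathrm{Id}$ $\nu$-a.e. Define
$$
\Phi := \nabla u^* \circ \xi .
$$
This is well defined a.e. because $\xi_\sharp\mathscr L^d = \nu$, so $\xi(x)$ avoids the $\nu$-null set where $\nabla u^*$ fails to exist or fails to invert $\nabla u$. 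Then
$$
\Phi_\sharp \mathscr L^d\llcorner\Omega = (\nabla u^*)_\sharp \nu = \mathscr L^d\llcorner\Omega,
$$
so $\Phi$ is measure preserving and takes values in $\Omega$ a.e. Moreover
$$
\nabla u\circ\Phi = \nabla u\circ\nabla u^*\circ\xi = \xi \quad \text{a.e.},
$$
which gives existence.

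\textbf{Step 3: uniqueness.} Suppose $\xi = \nabla v\circ\Psi$ with $v$ convex and $\Psi$ measure preserving. Then $(\nabla v)_\sharp\mathscr L^d\llcorner\Omega = \xi_\sharp\mathscr L^d\llcorner\Omega = \nu$. A gradient of a convex function is optimal between its marginals, by cyclical monotonicity of its graph, so uniqueness in Brenier's theorem gives $\nabla v = \nabla u$ a.e.

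It remains to show $\Psi = \Phi$. We have $\nabla u\circ\Psi = \nabla u \circ\Phi$ a.e. Compose both sides with $\nabla u^*$, which inverts $\nabla u$ $\mathscr L^d$-a.e. Since $\Psi$ and $\Phi$ push $\mathscr L^d$ to $\mathscr L^d$, their values avoid the null set where this inversion fails, and therefore $\Psi=\Phi$ a.e.

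\textbf{Main obstacle.} The delicate step is the a.e. invertibility $\nabla u^*\circ\nabla u=\mathrm{Id}$ together with the null-set bookkeeping. Both Brenier maps must be defined on full-measure sets, and the compositions must land in those sets. I would handle this with the Fenchel identity: $y=\nabla u(x)$ if and only if $x\in\partial u^*(y)$, and $u^*$ is differentiable $\nu$-a.e. because $\nu\ll\mathscr L^d$. The absolute continuity hypothesis on $\xi_\sharp\mathscr L^d$ is used exactly here.
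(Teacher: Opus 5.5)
The paper gives no proof of this theorem; it simply quotes it from \cite{Bre91}. Your argument is the standard derivation of the polar factorization from Brenier's transport theorem, and it is correct: you take the Brenier map $\nabla u$, invert it by $\nabla u^*$ (which is legitimate because $\xi_\sharp \mathscr L^d \llcorner \Omega \ll \mathscr L^d$), set $\Phi := \nabla u^* \circ \xi$, and get uniqueness from uniqueness of the monotone map followed by composition with $\nabla u^*$. Keep in mind the following caveats:
\begin{itemize}
\item uniqueness holds only up to $\mathscr L^d$-null sets, and up to additive constants in $u$;
\item $u$ is naturally a convex function on $\mathbb R^d$ with values in $\mathbb R \cup \{+\infty\}$, not on $\Omega$;
\item the finite second moment of $\mathscr L^d \llcorner \Omega$ that you add is the standing assumption of Section \ref{sec:optproof}, and it is what makes the quadratic-cost optimality meaningful.
\end{itemize}
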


We have the following proposition.
\begin{proposition}\label{prop:convergencepolar}
    Let $\{X_t\}_{t > 0}$ be a family of maps $X_t \in \mathbf L^2(\Omega, \mathbb R^d)$. Let $\mathbf b \in \mathbf L^2(\Omega, \mathbb R^d)$ be a vector field such that 
    \begin{equation}\label{eq:assb}
    \lim_{t \to 0^+} \int_\Omega \left(\frac{X_t(x)-x}{t}- \mathbf b(x) \right)^2 \mathrm{d} x = 0.
    \end{equation}
    Letting
\begin{equation*}
X_t = \nabla {f_t} \circ \Phi_t
\end{equation*}
be the Polar Decomposition of $X_t$ given by Theorem \ref{thm:pd}, it holds
\begin{equation*}
\nabla h_t := \frac{\nabla {f_t} - \Id}{t} \to \mathtt P_\nabla \b, \quad \frac{\Phi_t - \Id}{t} \to (\Id - \mathtt P_\nabla) \b \quad \text{in $\mathbf L^2$ as $t \to 0^+$}
\end{equation*}
where $\mathtt P_\nabla : \mathbf L^2(\Omega, \mathbb R^d) \to \mathbf L^2(\Omega, \mathbb R^d)$ is the projection on the space of gradients.
\end{proposition}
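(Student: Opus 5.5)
The strategy is to exploit the variational characterization of the polar factorization (Theorem \ref{thm:pd}): $\nabla f_t$ is the $\mathbf L^2$-projection of $X_t$ onto the closed set of gradients of convex functions. Equivalently, $\Phi_t$ minimizes $\|X_t - \Phi\|_{\mathbf L^2}$ among measure preserving maps $\Phi$, and this minimization is the same as maximizing $\int X_t\cdot \Phi$. Set $\b_t := (X_t-\Id)/t$, so that $\b_t \to \b$ in $\mathbf L^2$ by \eqref{eq:assb}. Write
$$
\nabla f_t = \Id + t\nabla h_t, \qquad \Phi_t = \Id + t\, w_t .
$$
Since $\nabla f_t\circ\Phi_t = X_t$, we get
$$
\b_t = \frac{\nabla f_t\circ \Phi_t - \Phi_t}{t} + \frac{\Phi_t-\Id}{t} = \nabla h_t\circ\Phi_t + w_t .
$$

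\textbf{Step 1: uniform bounds.} Because $\Phi_t$ preserves $\mathscr L^d\llcorner\Omega$, we have $\|\nabla h_t\circ\Phi_t\|_2=\|\nabla h_t\|_2$. Optimality of $\Phi_t$, tested against the competitor $\Phi=\Id$, gives $\|X_t-\Phi_t\|_2\le \|X_t-\Id\|_2 = t\|\b_t\|_2$. Hence $\|w_t\|_2 \le 2\|\b_t\|_2$, and so $\|\nabla h_t\|_2\le 3\|\b_t\|_2$. Both families are therefore bounded in $\mathbf L^2$ and admit weak limits along subsequences: $\nabla h_t\rightharpoonup g$ and $w_t\rightharpoonup w$, where $g$ is a gradient because gradients form a closed subspace.

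\textbf{Step 2: identify the limits.} First I will show $\nabla h_t\circ\Phi_t-\nabla h_t\to0$ weakly. For a smooth test field $\zeta$,
$$
\int \nabla h_t\circ\Phi_t\cdot\zeta=\int\nabla h_t\cdot \zeta\circ\Phi_t^{-1},
$$
using the measure preserving inverse. This transfer requires some care because $\Phi_t$ is only a.e. invertible; alternatively one can work with couplings. Since $\|\Phi_t-\Id\|_2=O(t)$, we have $\zeta\circ\Phi_t^{-1}\to\zeta$ in $\mathbf L^2$, and therefore $g+w=\b$. Second, $w$ is divergence free, in the sense of being orthogonal to all gradients $\nabla\psi$ with $\psi\in C^1_c$. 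Indeed,
$$
0=\int\psi\circ\Phi_t-\psi = t\int\nabla\psi\cdot w_t+O\big(t^2\|w_t\|_2^2\big),
$$
where measure preservation gives the first equality. Dividing by $t$ and letting $t\to0$ yields $\int\nabla\psi\cdot w=0$. Thus $g=\mathtt P_\nabla\b$ and $w=(\Id-\mathtt P_\nabla)\b$, and since the limit is independent of the subsequence, the whole family converges weakly.

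\textbf{Step 3: upgrade to strong convergence (main obstacle).} It suffices to prove
$$
\limsup\big(\|\nabla h_t\|_2^2+\|w_t\|_2^2\big)\le\|\b\|_2^2=\|g\|_2^2+\|w\|_2^2 .
$$
The plan is to use the optimality of $\Phi_t$ more sharply. The cross term $\int \nabla h_t\circ\Phi_t\cdot w_t$ should be shown to be asymptotically nonpositive. To see this, use cyclical monotonicity of $\nabla f_t$, i.e. $\int(\nabla f_t(\Phi_t)-\nabla f_t(\Id))\cdot(\Phi_t-\Id)\ge0$, together with $\int\nabla f_t\circ\Phi_t=\int\nabla f_t$ and the convexity inequality $f_t(\Phi_t)-f_t\ge\nabla f_t\cdot(\Phi_t-\Id)$ integrated against measure preservation. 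This gives $\int \nabla f_t\cdot(\Phi_t-\Id)\le0$, and after rescaling, $\int\nabla h_t\cdot w_t\le O(t)$. One must also control the discrepancy between $\nabla h_t\circ\Phi_t$ and $\nabla h_t$ in the cross term; this is where the argument is most delicate. From
$$
\|\b_t\|^2=\|\nabla h_t\|^2+\|w_t\|^2+2\int\nabla h_t\circ\Phi_t\cdot w_t
$$
the desired limsup follows. Weak convergence combined with convergence of norms then gives strong $\mathbf L^2$ convergence of both $\nabla h_t$ and $w_t$.
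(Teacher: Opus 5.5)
Your Steps 1 and 2 are a workable alternative route to the weak limits, modulo the points at the end. Step 3, however, is where the whole difficulty lies, and it does not go through; the tools you propose for it cannot close it.

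First, the sign is backwards. From $\|\b_t\|_2^2=\|\nabla h_t\|_2^2+\|w_t\|_2^2+2\int\nabla h_t\circ\Phi_t\cdot w_t$, the bound $\limsup_{t\to0}(\|\nabla h_t\|_2^2+\|w_t\|_2^2)\le\|\b\|_2^2$ requires $\liminf_{t\to0}\int\nabla h_t\circ\Phi_t\cdot w_t\ge0$. The asymptotic nonpositivity you aim for is actually automatic from weak lower semicontinuity once Step 2 is in place, so it carries no information.

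Second, the inequalities you invoke only involve the two measure preserving maps $\Id$ and $\Phi_t$; monotonicity of $\nabla f_t$ is just the sum of the convexity inequalities at $x$ and at $\Phi_t(x)$. The integrated convexity inequality at $x$ gives $\int\nabla h_t\cdot w_t\le\frac12\|w_t\|_2^2$, not $O(t)$. This is because $\int_\Omega x\cdot(\Phi_t(x)-x)\,dx=-\frac{t^2}{2}\|w_t\|_2^2$ is of the same order as the term you keep. The inequality at $\Phi_t(x)$ gives $\int\nabla h_t\circ\Phi_t\cdot w_t\ge-\frac12\|w_t\|_2^2$. Inserted in the identity, this returns only $\|\nabla h_t\|_2\le\|\b_t\|_2$, i.e.\ the comparison of $\Phi_t$ with $\Id$ you already made in Step 1. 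That is strictly weaker than the needed $\limsup_{t\to0}\|\nabla h_t\|_2\le\|\mathtt P_\nabla\b\|_2$ whenever $\b$ is not a gradient.

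The missing idea is the converse of your Step 2. Every divergence-free field tangent to $\partial\Omega$ is, approximately, the velocity of a family of measure preserving maps, and such a family must be used as a competitor.

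This is exactly the paper's key step. Write $\b=\nabla g+\b_1$ and pick smooth divergence-free $\b_{1,n}$ vanishing on $\partial\Omega$ with $\|\b_1-\b_{1,n}\|_2\le2^{-n}$. Let $\tilde\Phi^n_t$ be its flow, which preserves $\mathscr L^d\llcorner\Omega$ and satisfies $(\tilde\Phi^n_t-\Id)/t\to\b_{1,n}$ in $\mathbf L^2$. In your own language, use the minimality of $\Phi_t$ among measure preserving maps, which you used in Step 1 only with $\Phi=\Id$: $t\|\nabla h_t\|_2=\|X_t-\Phi_t\|_2\le\|X_t-\tilde\Phi^n_t\|_2$. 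Hence $\limsup_{t\to0}\|\nabla h_t\|_2\le\|\b-\b_{1,n}\|_2\le\|\nabla g\|_2+2^{-n}$. The paper runs the same comparison through the $W_2$ triangle inequality and the optimality of $\nabla f_t$. Together with the weak convergence this gives $\nabla h_t\to\mathtt P_\nabla\b$ strongly. Then $w_t=\b_t-\nabla h_t\circ\Phi_t$ converges strongly, since $\nabla h_t\circ\Phi_t-\nabla h_t\to0$ in $\mathbf L^2$: approximate $\mathtt P_\nabla\b$ by a Lipschitz field and use $\|\Phi_t-\Id\|_2=O(t)$. No cross-term analysis is needed.

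Two smaller points in Step 2.
\begin{enumerate}
\item The map $\Phi_t$ need not be invertible even a.e.; it is not when $X_t$ fails to be injective. No inverse is needed, though: $\int\nabla h_t\circ\Phi_t\cdot\zeta=\int\nabla h_t\circ\Phi_t\cdot\zeta\circ\Phi_t+O(\mathrm{Lip}(\zeta)\|\nabla h_t\|_2\|\Phi_t-\Id\|_2)=\int\nabla h_t\cdot\zeta+O(t)$.
\item More seriously, orthogonality of $w$ to $\nabla\psi$ for $\psi\in C^1_c(\Omega)$ only gives $\operatorname{div}w=0$ in $\Omega$. That does not determine the splitting $\b=g+w$, since gradients of harmonic functions are both gradients and divergence free. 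You need $w\perp\nabla\psi$ also for $\psi$ not compactly supported in $\Omega$, e.g.\ $\psi\in C^2(\mathbb R^d)$ with bounded Hessian. Your Taylor argument gives this verbatim, because $\Phi_t$ preserves $\mathscr L^d\llcorner\Omega$ as a whole. This is what encodes the zero normal trace of $(\Id-\mathtt P_\nabla)\b$.
\end{enumerate}
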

\begin{proof}
\textbf{1.} Let $h_t$ be defined, up to constants, by 
$$\nabla h_t := \frac{\nabla {f_t} - \Id}{t}.$$
 As a first step, we compute the weak limit of $h_t$ as $t \to 0^+$. 
For any function $\psi \in C^2(\Omega)$, we compute
\begin{equation}\label{eq:ftexp}
\begin{aligned}
 \Bigg|\int_\Omega \psi(x)   & \mathrm{d} [(\nabla f_t)_\sharp \mathscr L^d\rest_\Omega](x) - \int_\Omega \Big( \psi(x) + t \nabla \psi(x) \cdot \nabla h_t(x)\Big)  \mathrm{d} x \Bigg|\\
 & =  \left|\int_\Omega \psi(x + t \nabla h_t(x)) \mathrm{d} x - \int_\Omega \Big( \psi(x) + t \nabla \psi(x) \cdot \nabla h_t(x)\Big)  \mathrm{d} x\right| \\
 & \qquad \leq  \frac{t^2}{2} \|\nabla^2\psi\|_\infty \|\nabla h_t\|^2_{\mathbf L^2}
\end{aligned}
\end{equation}
and
\begin{equation}\label{eq:Xtexp}
\begin{aligned}
 \Bigg| \int_\Omega \psi(x)  & \mathrm{d} [(X_t)_\sharp \mathscr L^d \rest_\Omega](x)-\int_\Omega \Big(\psi(x)  +  \nabla \psi(x) \cdot (X_t(x) - x)\Big)  \mathrm{d} x\Bigg| & \\
 & \Bigg|\int_\Omega \psi(X_t(x))  \mathrm{d} x-\int_\Omega \Big(\psi(x)  +  \nabla \psi(x) \cdot (X_t(x) - x)\Big)  \mathrm{d} x\Bigg|   \\
&\leq \qquad  \frac{t^2}{2} \|\nabla^2\psi\|_\infty \|X_t - \Id \|^2_{\mathbf L^2}.
\end{aligned}
\end{equation}
Since $\nabla f_t$ and $X_t$ have the same image measure
$$
(\nabla {f_t})_\sharp \mathscr L^d \rest_{\Omega} = {X_t}_\sharp \mathscr L^d \rest_{\Omega},
$$
and using that, by  optimality of $\nabla {f_t}$,
\begin{equation*}
\|\nabla h_t\|_{\mathbf L^2(\Omega)} = \bigg\| \frac{\nabla f_t - \Id}{t} \bigg\|_{\mathbf L^2(\Omega)} \leq \frac{1}{t} \|X_t - \Id\|_{\mathbf L^2(\Omega)}
\end{equation*}
we obtain from \eqref{eq:Xtexp}, \eqref{eq:ftexp} that
\begin{equation*}
\begin{aligned}
\Bigg| \int_\Omega \nabla \psi(x) \cdot \bigg( \nabla h_t(x) - \frac{X_t(x) - x}{t} \bigg) {\mathrm{d} x} \Bigg| & = \Bigg|\int_\Omega \nabla \psi(x) \cdot \bigg( \nabla h_t - \mathtt P_\nabla \bigg( \frac{X_t - \Id}{t} \bigg)(x) \bigg) {\mathrm{d} x}\Bigg| \\
& \leq  \; t \, \|\nabla^2 \psi\|_\infty  \bigg\| \frac{X_t - \Id}{t} \bigg\|_{\mathbf L^2(\Omega)}.
\end{aligned}
\end{equation*}
Finally, letting $t \to 0$ we conclude 
\begin{equation}\label{eq:weakht}
    \nabla h_t \longrightarrow \mathtt P_\nabla \b \quad \text{weakly in $\mathbf L^2(\Omega, \mathbb R^d)$ as $t \to 0^+$}
\end{equation}

\vspace{0.5cm}
\textbf{2.}
The aim of this step is to show that $\nabla h_t$ converges strongly to $\mathtt P_\nabla \b$ by showing that 
\begin{equation}\label{eq:norms}
    \limsup_{t \to 0^+} \|\nabla h_t\|_{\mathbf L^2} \leq \|\mathtt P_\nabla \b\|_{\mathbf L^2}.
\end{equation}
In order to prove \eqref{eq:norms}, we first consider the Helmhotz decomposition of $\b$ as
\begin{equation*}
\b = \nabla g + \b_1
\end{equation*}
where $\nabla g$, for some $g \in \mathbf W^{1,2}(\Omega)$, is the $\mathbf L^2$-projection of $\b$ on the subspace of gradients, and $\b_1$ is a divergence free vector field. 

Let $\b_{1,n} \in \mathbf L^2(\Omega, \mathbb R^d)$ be a sequence of smooth divergence free vector fields such that
\begin{equation}\label{eq:bnest}
\|\b_1 - \b_{1,n}\|_{\mathbf L^2(\Omega)} \leq 2^{-n}, \quad \b_{1,n} = 0 \quad \text{on $\partial \Omega$}
\end{equation}
and consider the corresponding flow map $\tilde \Phi^n_t: \Omega \to \Omega$, defined by
\begin{equation*}
\frac{d}{dt} \tilde \Phi^n_t(x) = \b_{1,n}(\tilde \Phi^n_t(x)), \quad \tilde \Phi_n(0,x) = x.
\end{equation*}
Clearly, $\tilde \Phi_t^n$ is measure preserving,
$$
(\tilde \Phi_t^n)_\sharp \mathscr L^2 \llcorner_\Omega = \mathscr L^2 \llcorner_\Omega
$$
since $\b_{1,n}$ are divergence free. Moreover, define  maps $\tilde X^n_t \in \mathbf L^2(\Omega, \mathbb R^d)$ by
\begin{equation*}
\tilde X_n(t,x) = \tilde \Phi^n_t(x) + t \nabla g(\tilde \Phi^n_t(x)).
\end{equation*}
By definition we have
\begin{equation*}
{\tilde X^n_t} = \nabla \tilde {f_t} \circ {\tilde \Phi^n_t}, \quad \tilde f(t,x) := \frac{x^2}{2} + t g(x).
\end{equation*}
Note that there holds by triangular inequality
\begin{equation*}
    \begin{aligned}
        \Bigg( \int_\Omega & \left| {\tilde \Phi^n_t(x)} + t \nabla g({\tilde \Phi^n_t(x)}) - \Big( x + t \b_{1,n}(x) + t \nabla g(x) \Big)\right|^2  \,\mathrm{d} x \Bigg)^{\frac{1}{2}}\\
        & \leq t \Bigg( \int_\Omega \left| \frac{\tilde \Phi^n_t(x) - x}{t} - \b_{1,n}(x)\right|^2  \,\mathrm{d} x \Bigg)^{\frac{1}{2}} + t\Bigg(\int_\Omega | \nabla g({\tilde \Phi^n_t(x)}) -   \nabla g(x)|^2 \mathrm{d} x\Bigg)^{\frac{1}{2}}
    \end{aligned}
\end{equation*}
Since $\b_{1,n}$ is smooth, one clearly has
$$
\lim_{t \to 0^+} \Bigg( \int_\Omega \left| \frac{\tilde \Phi^n_t(x) - x}{t} - \b_{1,n}(x)\right|^2  \,\mathrm{d} x \Bigg)^{\frac{1}{2}} = 0.
$$
We claim that also 
\begin{equation}\label{eq:claim3}
    \lim_{t \to 0^+} \Bigg(\int_\Omega | \nabla g({\tilde \Phi^n_t(x)}) -   \nabla g(x)|^2 \mathrm{d} x\Bigg)^{\frac{1}{2}} = 0.
\end{equation}
This can be easily seen since $\tilde \Phi_t^n$ is measure preserving, via a standard argument as follows. Let $\varepsilon > 0$, then by density there is a Lipschitz function $F: \Omega \to \mathbb R^d$ such that 
$$
\|F- \nabla g\|_{\mathbf L^2} \leq \varepsilon.
$$
Therefore we estimate 
$$
\begin{aligned}
   \lim_{t \to 0^+} \Bigg(\int_\Omega | \nabla g({\tilde \Phi^n_t(x)}) -   \nabla g(x)|^2 \mathrm{d} x \Bigg)^{\frac{1}{2}}& \leq 2\Bigg(\int_\Omega | \nabla g(x) - F(x)|^2 \mathrm{d} x\Bigg)^{\frac{1}{2}} \\
   & +  \lim_{t \to 0^+} \Bigg(\int_\Omega | F({\tilde \Phi^n_t(x)}) -  F(x)|^2 \mathrm{d} x\Bigg)^{\frac{1}{2}} \\
    & \leq 2 \varepsilon + \mathrm{Lip} \, (F)  \lim_{t \to 0^+}  \Bigg(\int_\Omega |\tilde \Phi_t^n(x) - x|^2 \mathrm{d} x \Bigg)^{\frac{1}{2}}= 2\varepsilon
\end{aligned}
$$
Since this holds for every $\varepsilon  >0$, \eqref{eq:claim3} is proved.
Therefore we have
\begin{equation}\label{eq:tildephi}
    \lim_{t \to 0^+} \frac{1}{t}\Bigg(\int_\Omega \left| {\tilde \Phi^n_t(x)} + t \nabla g({\tilde \Phi^n_t(x)}) - \Big( x + t \b_{1,n}(x) + t \nabla g(x) \Big)\right|^2  \,\mathrm{d} x\Bigg)^{\frac{1}{2}} = 0
\end{equation}
We thus compute
\begin{equation}
    \begin{aligned}
        \frac{1}{t}\|X_t - \tilde X_t^n\|_{\mathbf L^2} & \leq \frac{1}{t}\|X_t - (\Id + t \nabla \b)\|_{\mathbf L^2} \\
        & + \frac{1}{t}\|(\Id + t \nabla \b) -(\Id + t (\nabla g + \b_{1,n}))\|_{\mathbf L^2} \\
        & + \frac{1}{t}\| (\Id + t (\nabla g + \b_{1,n})) - \tilde X_t^n\|_{\mathbf L^2}\\
        & \leq o(1) +2^{-n} + o(1)
    \end{aligned}
\end{equation}
where $o(1)$ is a quantity approaching zero with $t \to 0$, and we used respectively \eqref{eq:assb} to estimate the first term, \eqref{eq:bnest} to estimate the second term, and \eqref{eq:tildephi} to estimate the third term.
Then we can estimate the 2-Wasserstein distance between
${X_t}_\sharp \mathscr L^d \rest_{\Omega}$ and ${(\tilde X^n_t)}_\sharp \mathscr L^d \rest_{\Omega}$ as
\begin{equation}\label{eq:cost1}
\begin{aligned}
\dist_{W_2} \big( {X_t}_\sharp \mathscr L^d \rest_{\Omega},{(\tilde X^n_t)}_\sharp \mathscr L^d \rest_{\Omega} \big) & = \|X_t- \tilde X_t^n\|^2_{\mathbf L^2} \leq  t^2 2^{-n+1}
\end{aligned}
\end{equation}
for all $t>0$ and sufficiently small. By the triangular inequality for the 2-Wasserstain distance, we obtain that 
\begin{equation}
    \begin{aligned}
        \dist_{W_2} \big( {X_t}_\sharp \mathscr L^d \rest_{\Omega}, \; \mathscr L^d \rest_\Omega \big)&  \leq \dist_{W_2} \big( {X_t}_\sharp \mathscr L^d \rest_{\Omega},{(\tilde X^n_t)}_\sharp \mathscr L^d \rest_{\Omega} \big)  \\
        & +\dist_{W_2} \big(  \mathscr L^d \rest_\Omega, \; (\tilde X^n_t)_\sharp \mathscr L^d \rest_{\Omega}\big)  \\
         & \leq t^2 2^{-n+1} + t^2\|\nabla g\|_{\mathbf L^2}^2
    \end{aligned}
\end{equation}
where we have used that 
\begin{equation}\label{eq:claim2}
\dist_{W_2} \big(  \mathscr L^d \rest_\Omega, \; (\tilde X^n_t)_\sharp \mathscr L^d \rest_{\Omega}\big) \leq t^2 \|\nabla g\|_{\mathbf L^2}^2
\end{equation}
To see that \eqref{eq:claim2} holds, first notice that since $(\tilde \Phi^n_t)$ is measure preserving, we deduce
\begin{equation}
(\nabla \tilde f_t)_\sharp \mathscr L^d \rest_\Omega = (\nabla \tilde f_t)_\sharp \circ (\tilde \Phi^n_t)_\sharp \mathscr L^d \rest_\Omega =(\tilde X^n_t)_\sharp \mathscr L^d \rest_{\Omega}.
\end{equation}
In particular $\nabla \tilde f_t$ is an admissible map between $\mathscr L^d \rest_\Omega$ and $(\tilde X^n_t)_\sharp \mathscr L^d \rest_{\Omega}$, with cost 
$$
\|\nabla \tilde f_t - \Id\|_{\mathbf L^2}^2 = t^2\|\nabla g\|_{\mathbf L^2}^2
$$
and this proves \eqref{eq:claim2}.
Since $\nabla f_t$ is the optimal map between $\mathscr L^d \rest_\Omega$ and $(X_t)_\sharp \mathscr L^d \rest_\Omega$, we deduce that for all $t> 0$ small
\begin{equation}
    \|\nabla f_t -\Id\|_{\mathbf L^2}^2 = \dist_{W_2} \big( {X_t}_\sharp \mathscr L^d \rest_{\Omega}, \; \mathscr L^d \rest_\Omega \big) \leq t^2 \|\nabla g\|_{\mathbf L^2}^2 + t^2 2^{-n+1}
\end{equation}
We thus conclude that
\begin{equation*}
\limsup_{t \to 0} \int |\nabla h_t|_2^2 \leq \int_\Omega |\nabla g|^2 \mathrm{d} x + 2^{-n}.
\end{equation*}
This holds for all $n$, so that
\begin{equation}\label{eq:norms}
\limsup_{t \to 0} \|\nabla h_t\|_{\mathbf L^2(\Omega)} \leq \|\nabla g\|_{\mathbf L^2(\Omega)}.
\end{equation}
Using that $\nabla h_t \rightharpoonup \nabla g$ weakly, we thus conclude from \eqref{eq:norms} that the convergence is strong by the uniform convexity of $\mathbf L^2$.

\vspace{0.5cm}
\textbf{3.} Notice that we have
\begin{equation}\label{eq:claim1}
\bigg\| \frac{\Phi_t - \Id}{t} \bigg\|_{\mathbf L^2(\Omega)} \leq \|h_t\|_{\mathbf L^2(\Omega)} + \bigg\| \frac{X_t - \Id}{t} \bigg\|_{\mathbf L^2(\Omega)} \leq 2 \bigg\| \frac{X_t - \Id}{t} \bigg\|_{\mathbf L^2(\Omega)}.
\end{equation}
Then we deduce
\begin{equation*}
\begin{split}
\frac{\nabla {X_t} - \Id}{t} &= \frac{\nabla {f_t}\circ \Phi_t - \Phi_t}{t} + \frac{\Phi_t - \Id}{t} = h_t(\Phi_t) + \frac{\Phi_t - \Id}{t},
\end{split}
\end{equation*}
and we conclude that
\begin{equation*}
\lim_{t \to 0} \frac{\Phi_t - \Id}{t} = \lim_{t \to 0} \frac{{X_t} - \Id}{t} - \lim_{t \to 0} h_t \circ \Phi_t = \b - \mathtt P_\nabla \b. \qedhere
\end{equation*}
where we used that since \eqref{eq:claim1} holds, then $h_t \circ \Phi_t \to \mathtt P_\nabla \b$ strongly in $\mathbf L^2$ (this can done as in the proof of \eqref{eq:claim3}).
\end{proof}

Applying the result to our setting, we obtain:

\begin{corollary}
\label{Cor:conser_transprto}
Let $\phi \in \mathbf W^{1,2}(\Omega)$ such that \emph{(\textbf{MPC})} holds. Let $\nabla f_t, \Phi_t$ be the maps in the polar decomposition of $T_t$, i.e.
$$
T_t = \nabla f_t \circ \Phi_t.
$$
Then 
\begin{equation*}
\lim_{t \to 0} \bigg\| \frac{{\nabla f_t} - \Id}{t} - \nabla \phi \bigg\|_{\mathbf L^2} = 0, \quad \lim_{t \to 0} \bigg\| \frac{\Phi_t - \Id}{t} \bigg\|_{\mathbf L^2} = 0,
\end{equation*}
\end{corollary}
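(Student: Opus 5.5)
The corollary follows by applying Proposition \ref{prop:convergencepolar} with the family $X_t := T_t = \Id + t\nabla\phi$ and $\mathbf b := \nabla\phi$, and then identifying the two limits.

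\textbf{Hypotheses.} Since $\phi \in \mathbf W^{1,2}(\Omega)$, we have $\nabla\phi \in \mathbf L^2(\Omega,\mathbb R^d)$. Under the standing finite-moment assumption of this section, $T_t \in \mathbf L^2(\Omega,\mathbb R^d)$ for every $t$. Moreover
\[
\frac{X_t - \Id}{t} - \mathbf b = 0
\]
identically, so \eqref{eq:assb} holds trivially. For the polar factorization of Theorem \ref{thm:pd} to exist, $(T_t)_\sharp\mathscr L^d\llcorner\Omega$ must be absolutely continuous. By (\textbf{MPC}) and Remark \ref{rem:mpexp}, this measure equals $\mathscr L^d\llcorner T_t(F)$ for a.e. $t$, which is absolutely continuous. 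Hence the decomposition $T_t = \nabla f_t\circ\Phi_t$ is defined for a.e. $t>0$, and we take the limit $t\to0$ along such $t$. The proof of Proposition \ref{prop:convergencepolar} only uses estimates valid for each fixed $t$, so restricting to a full-measure set of times is harmless.

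\textbf{Identifying the limits.} Proposition \ref{prop:convergencepolar} gives, strongly in $\mathbf L^2$,
\[
\frac{\nabla f_t - \Id}{t} \to \mathtt P_\nabla \nabla\phi, \qquad \frac{\Phi_t - \Id}{t} \to (\Id - \mathtt P_\nabla)\nabla\phi .
\]
Since $\nabla\phi$ is already a gradient of a $\mathbf W^{1,2}$ function, it lies in the closed subspace onto which $\mathtt P_\nabla$ projects. Therefore $\mathtt P_\nabla\nabla\phi = \nabla\phi$ and $(\Id - \mathtt P_\nabla)\nabla\phi = 0$, which are exactly the two claimed limits.

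\textbf{Main obstacle.} The substantive point is checking that $\nabla\phi$ belongs to the range of $\mathtt P_\nabla$ as that projection is used in the proof of Proposition \ref{prop:convergencepolar}. There, the Helmholtz decomposition is $\mathbf b = \nabla g + \mathbf b_1$, with $g\in\mathbf W^{1,2}(\Omega)$ and $\mathbf b_1$ approximated by fields that are divergence free and vanish on $\partial\Omega$. The gradient space is therefore all of $\{\nabla g : g\in \mathbf W^{1,2}(\Omega)\}$, with no boundary condition on $g$. With this reading, $\nabla\phi$ is its own projection and the decomposition is simply $\mathbf b = \nabla\phi + 0$. No further estimate is needed.
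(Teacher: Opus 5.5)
Your proposal is correct and matches the paper's intended argument: the paper states the corollary as a direct application of Proposition~\ref{prop:convergencepolar} with $X_t = T_t$ and $\mathbf b = \nabla\phi$, so that \eqref{eq:assb} holds identically, $\mathtt P_\nabla \nabla\phi = \nabla\phi$ and $(\Id - \mathtt P_\nabla)\nabla\phi = 0$. Your extra checks fill in details the paper leaves implicit: absolute continuity of $(T_t)_\sharp \mathscr L^d\llcorner\Omega$ via (\textbf{MPC}), restricting to a.e.\ $t$, and the section's finite second-moment assumption giving $T_t \in \mathbf L^2$.
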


We also obtain a different proof of the following Proposition.

\begin{proposition}
\label{Prop:laplacian_1}
Let $\phi \in \mathbf W^{1,2}(\Omega)$ such that \emph{(\textbf{MPC})} holds. Then
$$
\Delta \phi \geq 0.
$$
\end{proposition}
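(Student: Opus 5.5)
We prove Proposition~\ref{Prop:laplacian_1} via the optimal transport framework of this section, using Corollary~\ref{Cor:conser_transprto} to linearize the convex Brenier potentials $f_t$ around the identity. Write $T_t = \nabla f_t \circ \Phi_t$. By (\textbf{MPC}) and Remark~\ref{rem:mpexp}, $\nu_t = (T_t)_\sharp \mathscr L^d \llcorner \Omega = (\nabla f_t)_\sharp \mathscr L^d\llcorner\Omega \leq \mathscr L^d$ for a.e.\ $t$. In particular $\nabla f_t$ is itself an expanding map for a.e.\ $t$. Because $f_t$ is convex and $\nabla f_t$ is injective off a null set, this gives $\det D^2_a f_t \geq 1$ a.e. Concretely, by McCann's change of variables (Monge--Amp\`ere), $\mathscr L^d\llcorner\Omega = \det D^2_a f_t \,\cdot\, \nu_t$-density composed with $\nabla f_t$, and the density of $\nu_t$ is at most $1$.

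The key convexity input is the inequality
\[
\det (D^2_a f_t)^{1/d} \leq \tfrac1d \,\mathrm{Tr}\, D^2_a f_t ,
\]
which is AM--GM applied to the nonnegative eigenvalues. Combined with $\det D^2_a f_t \geq 1$, it yields $\Delta_a f_t \geq d$ a.e. Moreover, for a convex function the distributional Laplacian dominates its absolutely continuous part, since the singular part of $D^2 f_t$ is a nonnegative matrix measure. Hence $\Delta f_t \geq d$ in $\mathscr D'(\Omega)$. Writing $f_t = \tfrac{|x|^2}{2} + t h_t$, with $h_t$ as in Proposition~\ref{prop:convergencepolar}, this becomes
\[
t\,\Delta h_t \geq 0, \qquad\text{that is,}\qquad -\int_\Omega \nabla h_t\cdot\nabla\psi \,\mathrm{d}x \geq 0
\]
for all $0\leq\psi\in C^1_c(\Omega)$, for a.e.\ small $t$. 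In words, each $h_t$ is subharmonic.

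The final step passes to the limit. By Corollary~\ref{Cor:conser_transprto}, $\nabla h_t \to \nabla\phi$ in $\mathbf L^2(\Omega)$ as $t\to0$ along the admissible times. The inequality $-\int\nabla h_t\cdot\nabla\psi\geq0$ is stable under weak convergence, so $-\int_\Omega\nabla\phi\cdot\nabla\psi\,\mathrm{d}x\geq 0$, i.e.\ $\Delta\phi\geq0$. Here (\textbf{MPC}) enters through the hypothesis \eqref{eq:assb} with $\b=\nabla\phi$, which is exact since $(T_t-\Id)/t=\nabla\phi$. The corollary also needs $\mathtt P_\nabla\nabla\phi=\nabla\phi$, which should hold with the Helmholtz projection of Proposition~\ref{prop:convergencepolar}, up to boundary conventions. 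Note that only weak convergence is needed, so step 1 of that proposition would already suffice.

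The main obstacle is the justification of $\det D^2_a f_t\geq 1$ from the expansion property. This requires the Monge--Amp\`ere/Jacobian equation for Brenier maps in McCann's form, valid $\mathscr L^d$-a.e.\ where $f_t$ has an Alexandrov second derivative. It also requires checking that the density of $\nu_t$ evaluated at $\nabla f_t(x)$ is at most $1$ for a.e.\ $x$, which follows since $\nabla f_t$ pushes $\mathscr L^d$ to $\nu_t$ and hence null sets of the density bound pull back to null sets. If the Alexandrov route proves delicate, the fallback is to mollify $f_t$. Mollification preserves convexity, and by Jensen and concavity of $\det^{1/d}$ on positive semidefinite matrices it keeps $\Delta (f_t*\eta_\varepsilon)\geq d\,\bigl((\det D^2 f_t)^{1/d}*\eta_\varepsilon\bigr)\geq d$ on compact subsets. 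The conclusion is then obtained in $\mathscr D'$ as $\varepsilon\to0$.
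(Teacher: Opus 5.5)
Your proposal is correct and follows essentially the same route as the paper: polar factorization $T_t=\nabla f_t\circ\Phi_t$, a Jacobian bound on the absolutely continuous part of $D^2 f_t$, AM--GM to get $\mathrm{Tr}\,D^a\nabla h_t\geq 0$, nonnegativity of the singular part by convexity of $f_t$, and passage to the limit via Corollary~\ref{Cor:conser_transprto}. The differences are minor. You obtain $\det D^2_a f_t\geq 1$ from the expanding property via McCann's Monge--Amp\`ere equation, whereas the paper asserts $\det(\Id+tD^a\nabla h_t)=1$ from measure preservation and the $BV$ structure of $\nabla f_t$; your inequality version would also cover expanding maps. You also rightly note that the weak convergence from step~1 of Proposition~\ref{prop:convergencepolar} already suffices.
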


\begin{proof}
Let $\nabla f_t, \Phi_t$ be as above.
Being $\nabla {f_t}$ measure preserving and monotone, hence BV, it holds
\begin{equation*}
\det \big( \Id + t D^a \nabla h_t(x) \big) = 1,
\end{equation*}
for $\mathscr L^d$-a.e. $x \in \Omega$, where we recall 
$$
\nabla h_t = \frac{\nabla f_t - \Id}{t}.
$$
If $\lambda_{i}(t,x)$ are the eigenvalues of $D^a \nabla h_t(x)$ one gets, using the arithmetic-geometric inequality
\begin{equation}
\label{Equa:eigenvalues_nabla_h_t_1}
1 = \bigg( \prod_i (1 + t \lambda_{i}(t,x)) \bigg)^{\frac{1}{d}} \leq \frac{1}{d} \sum_i (1 + t \lambda_{i}(t,x)) = 1 + \frac{t}{d} \sum_i \lambda_{i}(t,x).
\end{equation}
By \eqref{Equa:eigenvalues_nabla_h_t_1}, there holds
$$
\mathrm{Tr}(D^a \nabla h_t) \geq 0
$$
while for the singular part of the Laplacian, one has
\begin{equation*}
t (\Delta h_t)^s = (\Delta {f_t})^s \geq 0,
\end{equation*}
being $f_t$ convex. Hence $\Delta h_t \geq 0$ in the sense of distributions, 
$$
\int_{\Omega} \nabla \psi(x) \cdot \nabla h_t(x) \mathrm{d} x \leq 0 \qquad \text{for all $\psi \in C^1_c(\Omega)$, $\psi \geq 0$}.
$$
Since by Corollary \ref{Cor:conser_transprto} we have the convergence $\nabla h_t \to \nabla \phi$  in $\mathbf L^2$, we deduce for every $\psi \in C^1_c(\Omega)$ that 
$$
0 \geq \lim_{t \to 0^+} \int_{\Omega} \nabla \psi(x) \cdot \nabla h_t(x) \mathrm{d} x = \lim_{t \to 0^+} \int_{\Omega} \nabla \psi(x) \cdot \nabla \phi(x) \mathrm{d} x 
$$
and this proves the result.
\end{proof}

\end{document}